\newtheorem{cor}[subsection]{Corollary}
\newtheorem{lem}[subsection]{Lemma}
\newtheorem{prop}[subsection]{Proposition}
\newtheorem{thm}[subsection]{Theorem}
\theoremstyle{remark}
\newtheorem{rem}[subsection]{Remark}
\newtheorem{example}[subsection]{Example}
\theoremstyle{definition}
\theoremstyle{remark}
\newcommand{\secref}[1]{Sect.~\ref{#1}}
\newcommand{\lemref}[1]{Lemma~\ref{#1}}
\newcommand{\propref}[1]{Proposition~\ref{#1}}
\newcommand{\corref}[1]{Corollary~\ref{#1}}
\numberwithin{equation}{section}
\newcommand{\nc}{\newcommand}
\nc{\renc}{\renewcommand}
\nc{\ssec}{\section}
\nc{\sssec}{\subsubsection}
\nc{\on}{\operatorname}
\newcommand{\epi}{\twoheadrightarrow}
\nc{\ips}{{\iota_P^{(S)}}}
\nc{\ipms}{{\iota_{P^-}^{(S)}}}
\nc{\sfpps}{{\sfp_P^{(S)}}}
\nc{\sfppms}{{\sfp_{P^-}^{(S)}}}
\nc\ol{\overline}
\nc\wt{\widetilde}
\nc\tboxtimes{\wt{\boxtimes}}
\nc\tstar{\wt{\star}}
\nc{\alp}{\alpha}
\nc{\ZZ}{{\mathbb Z}}
\nc{\NN}{{\mathbb N}}
\nc{\OO}{{\mathbb O}}
\renc{\SS}{{\mathbb S}}
\nc{\DD}{{\mathbb D}}
\nc{\GG}{{\mathbb G}}
\nc{\Fq}{{\mathbb F}_q}
\nc{\Fqb}{\ol{{\mathbb F}_q}}
\nc{\Ql}{\ol{{\mathbb Q}_\ell}}
\nc{\id}{\text{id}}
\nc\X{\mathcal X}
\nc{\red}{\on{red}}
\nc{\Ho}{\on{Ho}}
\nc{\Hom}{\on{Hom}}
\nc{\Mor}{\on{Mor}}
\nc{\coef}{\on{coeff}}
\nc{\Lie}{\on{Lie}}
\nc{\Loc}{\on{Loc}}
\nc{\Pic}{\on{Pic}}
\nc{\Bun}{\on{Bun}}
\nc{\IC}{\on{IC}}
\nc{\Aut}{\on{Aut}}
\nc{\rk}{\on{rk}}
\nc{\Sh}{\on{Sh}}
\nc{\Perv}{\on{Perv}}
\nc{\pos}{{\on{pos}}}
\nc{\Conv}{\on{Conv}}
\nc{\Sph}{\on{Sph}}
\nc{\Sym}{\on{Sym}}
\nc{\BunBb}{\overline{\Bun}_B}
\nc{\BunNb}{\overline{\Bun}_N}
\nc{\BunTb}{\overline{\Bun}_T}
\nc{\BunBbm}{\overline{\Bun}_{B^-}}
\nc{\BunBbel}{\overline{\Bun}_{B,el}}
\nc{\BunBbmel}{\overline{\Bun}_{B^-,el}}
\nc{\Buno}{\overset{o}{\Bun}}
\nc{\BunPb}{{\overline{\Bun}_P}}
\nc{\BunBM}{\Bun_{B(M)}}
\nc{\BunBMb}{\overline{\Bun}_{B(M)}}
\nc{\BunPbw}{{\widetilde{\Bun}_P}}
\nc{\BunBP}{\widetilde{\Bun}_{B,P}}
\nc{\GUb}{\overline{G/U}}
\nc{\GUPb}{\overline{G/U(P)}}
\nc{\PP}{\underline{P}'}
\nc{\Hhom}{\underline{\on{Hom}}}
\nc\syminfty{\on{Sym}^{\infty}}
\nc\lal{\ol{\lambda}}
\nc\xl{\ol{x}}
\nc\thl{\ol{\theta}}
\nc\nul{\ol{\nu}}
\nc\mul{\ol{\mu}}
\nc{\oX}{\overset{o}{X}{}}
\nc{\hl}{\overset{\leftarrow}h{}}
\nc{\hr}{\overset{\rightarrow}h{}}
\nc{\M}{{\mathcal M}}
\nc{\N}{{\mathcal N}}
\nc{\F}{{\mathcal F}}
\nc{\D}{{\mathcal D}}
\nc{\Q}{{\mathcal Q}}
\nc{\Y}{{\mathcal Y}}
\nc{\G}{{\mathcal G}}
\nc{\E}{{\mathcal E}}
\nc{\CalC}{{\mathcal C}}
\nc\Dh{\widehat{\D}}
\nc{\C}{{\mathcal C}}
\nc{\K}{{\mathcal K}}
\renewcommand{\H}{{\mathcal H}}
\nc{\T}{{\mathcal T}}
\nc{\V}{{\mathcal V}}
\renc{\P}{{\mathcal P}}
\nc{\A}{{\mathcal A}}
\nc{\B}{{\mathcal B}}
\nc{\U}{{\mathcal U}}
\nc{\Gr}{{\on{Gr}}}
\nc{\frn}{{\check{\mathfrak u}(P)}}
\nc{\fC}{\mathfrak C}
\nc{\p}{\mathfrak p}
\nc{\q}{\mathfrak q}
\nc\f{{\mathfrak f}}
\nc{\qo}{{\mathfrak q}}
\nc{\po}{{\mathfrak p}}
\nc{\s}{{\mathfrak s}}
\nc\w{\text{w}}
\nc\Spec{\on{Spec}}
\nc\Proj{\on{Proj}}
\nc\Mod{\on{Mod}}
\nc{\tw}{\widetilde{\mathfrak t}}
\nc{\pw}{\widetilde{\mathfrak p}}
\nc{\qw}{\widetilde{\mathfrak q}}
\nc{\jw}{\widetilde j}
\nc{\grb}{\overline{\Gr}}
\nc{\I}{\mathcal I}
\nc{\lambdach}{{\check\lambda}}
\nc{\Lambdach}{{\check\Lambda}{}}
\nc{\much}{{\check\mu}}
\nc{\omegach}{{\check\omega}}
\nc{\nuch}{{\check\nu}}
\nc{\etach}{{\check\eta}}
\nc{\alphach}{{\check\alpha}}
\nc{\oblvtach}{{\check\oblvta}}
\nc{\rhoch}{{\check\rho}}
\nc{\ch}{{\check h}}
\nc{\Hb}{\overline{\H}}
\nc{\BA}{{\mathbb{A}}}
\nc{\BC}{{\mathbb{C}}}
\nc{\BG}{{\mathbb{G}}}
\nc{\BM}{{\mathbb{M}}}
\nc{\BO}{{\mathbb{O}}}
\nc{\BD}{{\mathbb{D}}}
\nc{\BBD}{{\mathbf{D}}}
\nc{\BN}{{\mathbb{N}}}
\nc{\BP}{{\mathbb{P}}}
\nc{\BQ}{{\mathbb{Q}}}
\nc{\BR}{{\mathbb{R}}}
\nc{\BZ}{{\mathbb{Z}}}
\nc{\BS}{{\mathbb{S}}}
\nc{\Deep}{{\bf{deep}}}
\nc{\deep}{deep}
\nc{\CA}{{\mathcal{A}}}
\nc{\CB}{{\mathcal{B}}}
\nc{\CE}{{\mathcal{E}}}
\nc{\CF}{{\mathcal{F}}}
\nc{\CH}{{\mathcal{H}}}
\nc{\CL}{{\mathcal{L}}}
\nc{\CC}{{\mathcal{C}}}
\nc{\CG}{{\mathcal{G}}}
\nc{\CalD}{{\mathcal{D}}}
\nc{\CM}{{\mathcal{M}}}
\nc{\CN}{{\mathcal{N}}}
\nc{\CK}{{\mathcal{K}}}
\nc{\CO}{{\mathcal{O}}}
\nc{\CP}{{\mathcal{P}}}
\nc{\CQ}{{\mathcal{Q}}}
\nc{\CR}{{\mathcal{R}}}
\nc{\CS}{{\mathcal{S}}}
\nc{\CT}{{\mathcal{T}}}
\nc{\CU}{{\mathcal{U}}}
\nc{\CV}{{\mathcal{V}}}
\nc{\CW}{{\mathcal{W}}}
\nc{\CX}{{\mathcal{X}}}
\nc{\CY}{{\mathcal{Y}}}
\nc{\CZ}{{\mathcal{Z}}}
\nc{\CI}{{\mathcal{I}}}
\nc{\csM}{{\check{\mathcal A}}{}}
\nc{\oM}{{\overset{\circ}{\mathcal M}}{}}
\nc{\obM}{{\overset{\circ}{\mathbf M}}{}}
\nc{\oCA}{{\overset{\circ}{\mathcal A}}{}}
\nc{\obA}{{\overset{\circ}{\mathbf A}}{}}
\nc{\ooM}{{\overset{\circ}{M}}{}}
\nc{\osM}{{\overset{\circ}{\mathsf M}}{}}
\nc{\vM}{{\overset{\bullet}{\mathcal M}}{}}
\nc{\nM}{{\underset{\bullet}{\mathcal M}}{}}
\nc{\oD}{{\overset{\circ}{\mathcal D}}{}}
\nc{\obD}{{\overset{\circ}{\mathbf D}}{}}
\nc{\oA}{{\overset{\circ}{\mathbb A}}{}}
\nc{\op}{{\overset{\bullet}{\mathbf p}}{}}
\nc{\cp}{{\overset{\circ}{\mathbf p}}{}}
\nc{\oU}{{\overset{\bullet}{\mathcal U}}{}}
\nc{\oZ}{{\overset{\circ}{\mathcal Z}}{}}
\nc{\ofZ}{{\overset{\circ}{\mathfrak Z}}{}}
\nc{\oF}{{\overset{\circ}{\fF}}}
\nc{\fa}{{\mathfrak{a}}}
\nc{\fb}{{\mathfrak{b}}}
\nc{\fd}{{\mathfrak{d}}}
\nc{\ff}{{\mathfrak{f}}}
\nc{\fg}{{\mathfrak{g}}}
\nc{\fgl}{{\mathfrak{gl}}}
\nc{\fh}{{\mathfrak{h}}}
\nc{\fj}{{\mathfrak{j}}}
\nc{\fl}{{\mathfrak{l}}}
\nc{\fm}{{\mathfrak{m}}}
\nc{\fn}{{\mathfrak{n}}}
\nc{\fu}{{\mathfrak{u}}}
\nc{\fp}{{\mathfrak{p}}}
\nc{\fr}{{\mathfrak{r}}}
\nc{\fs}{{\mathfrak{s}}}
\nc{\ft}{{\mathfrak{t}}}
\nc{\fz}{{\mathfrak{z}}}
\nc{\fsl}{{\mathfrak{sl}}}
\nc{\hsl}{{\widehat{\mathfrak{sl}}}}
\nc{\hgl}{{\widehat{\mathfrak{gl}}}}
\nc{\hg}{{\widehat{\mathfrak{g}}}}
\nc{\chg}{{\widehat{\mathfrak{g}}}{}^\vee}
\nc{\hn}{{\widehat{\mathfrak{n}}}}
\nc{\chn}{{\widehat{\mathfrak{n}}}{}^\vee}
\nc{\fA}{{\mathfrak{A}}}
\nc{\fB}{{\mathfrak{B}}}
\nc{\fD}{{\mathfrak{D}}}
\nc{\fE}{{\mathfrak{E}}}
\nc{\fF}{{\mathfrak{F}}}
\nc{\fG}{{\mathfrak{G}}}
\nc{\fK}{{\mathfrak{K}}}
\nc{\fL}{{\mathfrak{L}}}
\nc{\fM}{{\mathfrak{M}}}
\nc{\fN}{{\mathfrak{N}}}
\nc{\fP}{{\mathfrak{P}}}
\nc{\fU}{{\mathfrak{U}}}
\nc{\fV}{{\mathfrak{V}}}
\nc{\fZ}{{\mathfrak{Z}}}
\nc{\bb}{{\mathbf{b}}}
\nc{\bc}{{\mathbf{c}}}
\nc{\bd}{{\mathbf{d}}}
\nc{\bbf}{{\mathbf{f}}}
\nc{\be}{{\mathbf{e}}}
\nc{\bi}{{\mathbf{i}}}
\nc{\bj}{{\mathbf{j}}}
\nc{\bn}{{\mathbf{n}}}
\nc{\bp}{{\mathbf{p}}}
\nc{\bq}{{\mathbf{q}}}
\nc{\bu}{{\mathbf{u}}}
\nc{\bv}{{\mathbf{v}}}
\nc{\bx}{{\mathbf{x}}}
\nc{\bs}{{\mathbf{s}}}
\nc{\by}{{\mathbf{y}}}
\nc{\bw}{{\mathbf{w}}}
\nc{\bA}{{\mathbf{A}}}
\nc{\bK}{{\mathbf{K}}}
\nc{\bB}{{\mathbf{B}}}
\nc{\bC}{{\mathbf{C}}}
\nc{\bG}{{\mathbf{G}}}
\nc{\bD}{{\mathbf{D}}}
\nc{\bH}{{\mathbf{H}}}
\nc{\bM}{{\mathbf{M}}}
\nc{\bN}{{\mathbf{N}}}
\nc{\bV}{{\mathbf{V}}}
\nc{\bW}{{\mathbf{W}}}
\nc{\bX}{{\mathbf{X}}}
\nc{\bZ}{{\mathbf{Z}}}
\nc{\bS}{{\mathbf{S}}}
\nc{\sA}{{\mathsf{A}}}
\nc{\sB}{{\mathsf{B}}}
\nc{\sC}{{\mathsf{C}}}
\nc{\sD}{{\mathsf{D}}}
\nc{\sF}{{\mathsf{F}}}
\nc{\sG}{{\mathsf{G}}}
\nc{\sK}{{\mathsf{K}}}
\nc{\sM}{{\mathsf{M}}}
\nc{\sO}{{\mathsf{O}}}
\nc{\sW}{{\mathsf{W}}}
\nc{\sQ}{{\mathsf{Q}}}
\nc{\sP}{{\mathsf{P}}}
\nc{\sZ}{{\mathsf{Z}}}
\nc{\sfp}{{\mathsf{p}}}
\nc{\bsfp}{{\mathsf{\bar p}_P}}
\nc{\sfq}{{\mathsf{q}}}
\nc{\sr}{{\mathsf{r}}}
\nc{\bk}{{\mathsf{k}}}
\nc{\sg}{{\mathsf{g}}}
\nc{\sff}{{\mathsf{f}}}
\nc{\sfb}{{\mathsf{b}}}
\nc{\sfc}{{\mathsf{c}}}
\nc{\sd}{{\mathsf{d}}}
\nc{\BK}{{\bar{K}}}
\nc{\tA}{{\widetilde{\mathbf{A}}}}
\nc{\tB}{{\widetilde{\mathcal{B}}}}
\nc{\tg}{{\widetilde{\mathfrak{g}}}}
\nc{\tG}{{\widetilde{G}}}
\nc{\TM}{{\widetilde{\mathbb{M}}}{}}
\nc{\tO}{{\widetilde{\mathsf{O}}}{}}
\nc{\tU}{{\widetilde{\mathfrak{U}}}{}}
\nc{\TZ}{{\tilde{Z}}}
\nc{\tx}{{\tilde{x}}}
\nc{\tbv}{{\tilde{\bv}}}
\nc{\tfP}{{\widetilde{\mathfrak{P}}}{}}
\nc{\tz}{{\tilde{\zeta}}}
\nc{\tmu}{{\tilde{\mu}}}
\nc{\urho}{\underline{\rho}}
\nc{\uB}{\underline{B}}
\nc{\uC}{{\underline{\mathbb{C}}}}
\nc{\ui}{\underline{i}}
\nc{\uj}{\underline{j}}
\nc{\ofP}{{\overline{\mathfrak{P}}}}
\nc{\oB}{{\overline{\mathcal{B}}}}
\nc{\og}{{\overline{\mathfrak{g}}}}
\nc{\oI}{{\overline{I}}}
\nc{\eps}{\varepsilon}
\nc{\hrho}{{\hat{\rho}}}
\nc{\one}{{\mathbf{1}}}
\nc{\two}{{\mathbf{t}}}
\nc{\Rep}{{\mathop{\operatorname{\rm Rep}}}}
\nc{\Tot}{{\mathop{\operatorname{\rm Tot}}}}
\nc{\Ker}{{\mathop{\operatorname{\rm Ker}}}}
\nc{\im}{{\mathop{\operatorname{\rm Im}}}}
\nc{\Hilb}{{\mathop{\operatorname{\rm Hilb}}}}
\nc{\End}{{\mathop{\operatorname{\rm End}}}}
\nc{\Ext}{{\mathop{\operatorname{\rm Ext}}}}
\nc{\CHom}{{\mathop{\operatorname{{\mathcal{H}}\it om}}}}
\nc{\GL}{{\mathop{\operatorname{\rm GL}}}}
\nc{\gr}{{\mathop{\operatorname{\rm gr}}}}
\nc{\HN}{{\mathop{\operatorname{\rm HN}}}}
\nc{\Id}{{\mathop{\operatorname{\rm Id}}}}
\nc{\de}{{\mathop{\operatorname{\rm def}}}}
\nc{\length}{{\mathop{\operatorname{\rm length}}}}
\nc{\supp}{{\mathop{\operatorname{\rm supp}}}}
\nc{\Cliff}{{\mathsf{Cliff}}}
\nc{\Fl}{\on{Fl}}
\nc{\Fib}{{\mathsf{Fib}}}
\nc{\Coh}{{\on{Coh}}}
\nc{\QCoh}{{\on{QCoh}}}
\nc{\IndCoh}{{\on{IndCoh}}}
\nc{\FCoh}{{\mathsf{FCoh}}}
\nc{\reg}{{\text{\rm reg}}}
\nc{\cplus}{{\mathbf{C}_+}}
\nc{\cminus}{{\mathbf{C}_-}}
\nc{\cthree}{{\mathbf{C}_*}}
\nc{\Qbar}{{\bar{Q}}}
\nc\Eis{\on{Eis}}
\nc\Eisb{\ol\Eis{}}
\nc\Eisr{\on{Eis}^{rat}{}}
\nc\wh{\widehat}
\nc{\Def}{\on{Def_{\check{\fb}}(E)}}
\nc{\barZ}{\overline{Z}{}}
\nc{\barbarZ}{\overline{\barZ}{}}
\nc{\barpi}{\overline\pi}
\nc{\barbarpi}{\overline\barpi}
\nc{\barpip}{\overline\pi{}^+}
\nc{\barpim}{\overline\pi{}^-}
\nc{\fq}{\mathfrak q}
\nc{\fqb}{\ol{\sfq}{}}
\nc{\fpb}{\ol{\sfp}{}}
\nc{\fpr}{{\sfp^{rat}}{}}
\nc{\fqr}{{\sfq^{rat}}{}}
\nc{\hattimes}{\wh\otimes}
\nc{\bh}{{\bar{h}}}
\nc{\bOmega}{{\overline{\Omega(\check \fn)}}}
\nc{\seq}[1]{\stackrel{#1}{\sim}}
\nc{\cT}{{\check{T}}}
\nc{\cG}{{\check{G}}}
\nc{\cM}{{\check{M}}}
\nc{\cB}{{\check{B}}}
\nc{\ct}{{\check{\mathfrak t}}}
\nc{\cg}{{\check{\fg}}}
\nc{\cb}{{\check{\fb}}}
\nc{\cn}{{\check{\fn}}}
\nc{\cLambda}{{\check\Lambda}}
\nc{\cla}{{\check\lambda}}
\nc{\cmu}{{\check\mu}}
\nc{\cnu}{{\check\nu}}
\nc{\ceta}{{\check\eta}}
\nc{\DefbE}{{\on{Def}_{\cB}(E_\cT)}}
\nc{\imathb}{{\ol{\imath}}}
\nc{\rlr}{\overset{\longrightarrow}{\underset{\longrightarrow}\longleftarrow}}
\nc{\oBun}{\overset{\circ}\Bun}
\nc{\LocSys}{\on{LocSys}}
\nc{\BunBbb}{\ol{\ol{Bun}}_B}
\nc{\BunBr}{\Bun_B^{rat}}
\nc{\BunBrsg}{\Bun_B^{rat,\on{s.g.}}}
\nc{\BunBrp}{\Bun_B^{rat,polar}}
\nc{\BunBrpbg}{\Bun_B^{rat,polar,\on{b.g.}}}
\nc{\BunBrpsg}{\Bun_B^{rat,polar,\on{s.g.}}}
\nc{\BunTrp}{\Bun_T^{rat,polar}}
\nc{\BunTrpbg}{\Bun_T^{rat,polar,\on{b.g.}}}
\nc{\BunTrpsg}{\Bun_T^{rat,polar,\on{s.g.}}}
\nc{\BunNr}{\Bun_N^{rat}}
\nc{\BunNre}{\Bun_N^{enh,rat}}
\nc{\BunTr}{\Bun_T^{rat}}
\nc{\Vect}{\on{Vect}}
\nc{\Whit}{\on{Whit}}
\nc{\CTb}{\ol{\on{CT}}}
\nc{\Ran}{\on{Ran}}
\nc{\CTr}{\on{CT}^{rat}{}}
\nc\jmathr{\jmath^{rat}{}}
\nc{\ux}{\underline{x}}
\nc{\clambda}{{\check\lambda}}
\nc{\calpha}{{\check\alpha}}
\nc{\ind}{{\mathbf{ind}}}
\nc{\oblv}{{\mathbf{oblv}}}
\nc{\ox}{{\overline{x}}}
\nc{\cLa}{\check{\Lambda}}
\nc{\StinftyCat}{\on{DGCat}}
\nc{\inftyCat}{\infty\on{-Cat}}
\nc{\inftygroup}{\infty\on{-Grpd}}
\nc{\Dmod}{\on{D-mod}}
\nc{\CMaps}{{\mathcal Maps}}
\nc{\Maps}{\on{Maps}}
\nc{\affSch}{\on{Sch}^{\on{aff}}}
\nc{\dr}{{\on{dR}}}
\nc{\rD}{{\blacktriangle}}
\nc{\oCY}{\overset{\circ}\CY}
\nc{\leqG}{\underset{G}\leq}
\nc{\leqM}{\underset{M}\leq}
\nc{\leqGad}{\underset{G_{ad}}\leq}
\nc{\leqMad}{\underset{M_{ad}}\leq}
\nc{\psId}{\on{Ps-Id}}
\nc{\sotimes}{\overset{!}\otimes}
\begin{document}

\title[On the Langlands retraction]{On the Langlands retraction}

\author{V.~Drinfeld}
\address{Vladimir Drinfeld, Department of Mathematics, University of Chicago.}
\email{drinfeld@math.uchicago.edu}


\begin{abstract}

Given a root system in a vector space $V$,  Langlands defined in 1973 a canonical retraction 
$\fL :V\epi V^+$, where $V^+\subset V$ is the dominant chamber.
In this note we give a short review of the  material on this retraction
(which is well known under the name of ``Langlands' geometric lemmas").

The main purpose of this review is to provide a convenient reference for the work \cite{DrGa}, in which the Langlands retraction is used to define a coarsening of the Harder-Narasimhan-Shatz stratification of the stack of $G$-bundles on a smooth projective curve.

\end{abstract}

\maketitle


\ssec{Introduction}  

Given a root system in a Euclidean space $V$,  Langlands defined in \cite[Sect.~4]{La} a certain retraction 
$\fL :V\to V^+$, where $V^+$ is the dominant chamber. Later this retraction was discussed in 
 \cite[Ch. IV, Subsect.~3.3]{BoWa} and \cite[Sect.~1]{C}. 

In this note we briefly recall the definition and properties of $\fL$.
It has no new results compared with \cite{La} and  \cite{C}; my goal is only to provide a convenient reference
for the work \cite{DrGa} and possibly for some future works.

\medskip

Following J.~Carmona, we
begin in \secref{ss:retraction_metric} with the most naive definition of $\fL$ (which makes sense for a Euclidean space equipped with
\emph{any} basis $\{\alpha_i \}$): namely, $\fL (x)$ is the point of $V^+$ closest to $x$. 

Starting with Section~\ref{ss:key_statements}, we assume that 
$\langle \alpha_i\,, \alpha_j \rangle\le 0$ for $i\ne j$. The key point is that
under this assumption $\fL$ can be characterized 
in terms of the usual ordering on~$V$: namely, Corollary~\ref{c:Langlands} says that $\fL (x)$ is the
least element of the set 
\begin{equation}   \label{e:ge}
\{ y\in V^+\,|\, y\ge x\}.
\end{equation}

It is this characterization of $\fL$ that is important for most applications  (in particular, it is used in 
\cite[Appendix B]{DrGa}). One can consider it as a definition of $\fL$ and
Corollary~\ref{c:Langlands} as a way to prove the existence of the least element of the set \eqref{e:ge}.
In Section~\ref{ss:another} we give \emph{another} proof of this fact, which is independent of Sections~\ref{ss:retraction_metric}-\ref{ss:key_statements}; closely related to it are Remark~\ref{r:concave} and Example~\ref{ex:concave}.



In Section~\ref{ss:reductive groups} we define the Langlands retraction as a map from the space of rational coweights of a reductive group to the dominant cone.

In Section~\ref{ss:history} we make some historical remarks.

\bigskip

I thank R.~Bezrukavnikov and R.~Kottwitz for
drawing my attention to Langlands' articles  \cite{La65,La}. I thank S.~Schieder and A.~Zelevinsky for valuable comments. The author's research was partially supported by NSF grant DMS-1001660.


\ssec{The retraction defined by the metric}    \label{ss:retraction_metric}

Let $V$ be a finite-dimensional vector space over $\BR$ with a positive definite scalar
product $\langle \; ,\; \rangle$.
Let $\{\alpha_i \}_{i \in\Gamma}$ be an arbitrary basis in $V$ and $\{\omega_i \}_{i \in\Gamma}$ the dual basis.
Let $V^+\subset V$ denote the closed convex cone generated by the $\omega_i$'s, $i \in\Gamma$. 

Following J.~Carmona \cite[Sect.~1]{C}, we define the \emph{Langlands retraction} $\fL :V\to V^+$ as follows: $\fL (x)$ is the point of $V^+$ closest to $x$ (such point exists and is unique because
$V^+$ is closed and convex). It is easy to see that the map $\fL$ is continuous.

Let us give another description of $\fL$. 
For a subset $J\subset\Gamma$ let $K_J$ denote the closed convex cone generated by $\omega_j$ for $j \in\Gamma - J$ and by $- \alpha_i$ for $i \in  J$. Clearly, each $K_J$ is a simplicial cone of full dimension in $V$. Let $V_J$ denote the linear span of
$\alpha_j$, $j\in J$ (so $V_J^{\perp}$ is spanned by $\omega_i$, $i\not\in J$).
Let $\on{pr}_J :V\to V$ denote the orthogonal projection onto $V_J^{\perp}$,
so $\on{ker}\,(\on{pr}_J)=V_J$.

\begin{prop}   \label{p:Zel2}
\noindent{\em(a)} The map $\fL$ is piecewise linear. The cones $K_J$ 
are exactly the linearity domains of $\fL$. For $x\in K_J$ one has $\fL (x)=\on{pr}_J (x)$.

\noindent{\em(b)} The cones $K_J$ and their faces form a complete simplicial fan\footnote{This means that these cones cover $V$ and 
each intersection  $K_J\cap K_{J'}$ is a face in both $K_J$ and $K_{J'}\,$.} in
$V$, combinatorially equivalent to the coordinate fan\footnote{The coordinate fan is what one gets when
the basis $\{ \alpha_i\}$ is orthogonal.}.
\end{prop}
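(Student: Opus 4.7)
The plan is to base everything on the standard variational characterization of the closest-point projection onto a closed convex cone: for $x\in V$ and $y\in V^+$,
\[
y=\fL(x)\iff\langle x-y,\,z-y\rangle\le 0 \text{ for every } z\in V^+.
\]
Both parts of the proposition follow by unpacking this condition in the dual coordinates $\{\omega_i\}$ and $\{\alpha_i\}$. For part (a), I would fix $J\subseteq\Gamma$ and $x\in K_J$, writing $x=\sum_{j\notin J}c_j\omega_j-\sum_{i\in J}b_i\alpha_i$ with $c_j,b_i\ge 0$. Uniqueness of the orthogonal decomposition $V=V_J^\perp\oplus V_J$ identifies $y:=\sum_{j\notin J}c_j\omega_j$ with $\on{pr}_J(x)$. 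Then $y\in V^+$, and for any $z=\sum_ke_k\omega_k\in V^+$,
\[
\langle x-y,\,z-y\rangle=\left\langle-\sum_{i\in J}b_i\alpha_i,\ \sum_ke_k\omega_k-\sum_{j\notin J}c_j\omega_j\right\rangle=-\sum_{i\in J}b_ie_i\le 0,
\]
so $\fL|_{K_J}=\on{pr}_J$, which is linear. Since $\on{pr}_J=\on{pr}_{J'}$ would force $V_J=V_{J'}$, hence $J=J'$, once we know from (b) that the $K_J$ are full-dimensional and meet only along proper faces, they will be precisely the maximal linearity domains of $\fL$.

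For part (b), I would run the above in reverse. Given arbitrary $x\in V$, let $y=\fL(x)$ and write $y=\sum_kc_k\omega_k$, $x-y=\sum_ka_k\alpha_k$. Applying the characterization to the allowed perturbations $z=y\pm\eps\omega_k$ (for $c_k>0$) and $z=y+\eps\omega_k$ (for $c_k=0$) gives $a_k=0$ whenever $c_k>0$ and $a_k\le 0$ whenever $c_k=0$. Put $I:=\{k:c_k>0\}$ and $J_-:=\{k:a_k<0\}$, two disjoint subsets of $\Gamma$. One then reads off that $x\in K_J$ precisely for $J_-\subseteq J\subseteq\Gamma\setminus I$, and that $x$ always lies in the relative interior of the face of every such $K_J$ spanned by $\{\omega_k:k\in I\}\cup\{-\alpha_k:k\in J_-\}$---a face that depends only on the pair $(I,J_-)$ and not on the auxiliary $J$. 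From this canonical stratification of $V$ by disjoint pairs $(I,J_-)$, all remaining assertions follow: the $K_J$ cover $V$; each is full-dimensional and simplicial, since $\{\omega_j,j\notin J\}\cup\{\alpha_i,i\in J\}$ is a basis of $V$ (the change-of-basis determinant equals, up to sign, the $J\times J$ principal minor of the Gram matrix $(\langle\alpha_i,\alpha_j\rangle)$, positive by positive definiteness); any intersection $K_J\cap K_{J'}$ equals the common face with data $(\Gamma\setminus(J\cup J'),\,J\cap J')$; and the parametrization of faces by sign functions $\Gamma\to\{+,-,0\}$ (with $I\mapsto+$, $J_-\mapsto-$, else $\mapsto 0$) realizes the combinatorial equivalence with the coordinate fan.

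The main obstacle is the normal-cone/stratification step at the start of (b): once the sign patterns $(I,J_-)$ of the decomposition $x=y+(x-y)$ are shown to be intrinsic to $x$, all remaining claims (covering, maximality of linearity domains, simpliciality, intersections being faces, combinatorial equivalence) are bookkeeping.
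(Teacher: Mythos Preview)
Your proposal is correct and takes essentially the same approach as the paper. The paper reduces the proposition to Lemma~\ref{l:Zel1} (the normal-cone characterization: $y=\fL(x)$ iff $x-y$ lies in the cone generated by the $-\alpha_j$ with $\langle\alpha_j,y\rangle=0$), declaring both the lemma and the deduction ``straightforward''; your argument simply writes out that deduction explicitly, deriving the same characterization from the variational inequality for projection onto a closed convex cone and then unpacking it in the coordinates $\{\omega_i\},\{\alpha_i\}$ to obtain the stratification by pairs $(I,J_-)$.
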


\begin{rem}
The wording in the above proposition was suggested to us by A.~Zelevinsky.
\end{rem}

The proposition immediately follows from the next lemma, whose proof is straightforward.


\begin{lem}   \label{l:Zel1}
Let $x \in V$ and $y\in V^+$. Set $J:=\{ j\in\Gamma\,|\,\langle\alpha_j,y\rangle =0\}$.
Then the following are equivalent:

{\em(a)} $y = \fL(x)$.

{\em(b)} $x-y$ belongs to the 
the closed convex cone generated by $- \alpha_j$ for $j \in J$. \qed
\end{lem}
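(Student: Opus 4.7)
The plan is to reduce the lemma to the standard variational characterization of orthogonal projection onto a closed convex set and then translate the resulting inequalities into conditions on the coefficients of $x-y$ in the basis $\{\alpha_i\}$. The starting point is the classical fact: for any closed convex subset $C\subset V$ and any $y\in C$, the point $y$ is the element of $C$ closest to $x$ if and only if $\langle x-y,\,z-y\rangle\le 0$ for every $z\in C$. Apply this to $C=V^+$. Since $V^+$ is a cone containing $y$, taking $z=2y$ and $z=0$ forces $\langle x-y,\,y\rangle=0$, after which the remaining content of the inequality is that $\langle x-y,\,z\rangle\le 0$ for every $z\in V^+$. Because $V^+$ is generated as a cone by the $\omega_i$, this is equivalent to $\langle x-y,\,\omega_i\rangle\le 0$ for every $i\in\Gamma$.

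Next, I would expand $x-y=\sum_{i\in\Gamma}a_i\alpha_i$ in the basis $\{\alpha_i\}$. By duality $\langle\alpha_i,\omega_j\rangle=\delta_{ij}$, so $a_i=\langle x-y,\,\omega_i\rangle$, and the inequality condition above becomes simply $a_i\le 0$ for all $i\in\Gamma$. On the other hand, writing $y=\sum_{i\notin J}c_i\omega_i$ with $c_i>0$ (which is precisely the definition of $J$), the vanishing $\langle x-y,\,y\rangle=0$ becomes $\sum_{i\notin J}c_ia_i=0$, a sum of nonpositive terms with strictly positive coefficients; hence $a_i=0$ for every $i\notin J$. Combining these, (a) is equivalent to the assertion that $x-y=\sum_{j\in J}a_j\alpha_j$ with all $a_j\le 0$, which is exactly statement (b). The implications are manifestly reversible, so the two conditions are equivalent.

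I do not anticipate any serious obstacle here: the only real subtlety is the decomposition of the single variational inequality $\langle x-y,\,z-y\rangle\le 0$ into the equality $\langle x-y,\,y\rangle=0$ together with the family of inequalities $\langle x-y,\,\omega_i\rangle\le 0$. This step makes essential use of the fact that $V^+$ is a cone stable under nonnegative rescaling of $y$, rather than merely a convex set, and without it the argument would not cleanly separate into the two orthogonality conditions that match (b).
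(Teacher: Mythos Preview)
Your argument is correct and is precisely the standard variational characterization of projection onto a closed convex cone that the paper has in mind when it declares the proof ``straightforward'' and omits it. There is nothing further to compare, since the paper gives no details beyond that word.
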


%
%
%

\ssec{The key statements}   \label{ss:key_statements}
Let $V^{pos}$ denote the cone dual to $V^+$, i.e., the closed convex cone generated by the 
$\alpha_i$'s, $i \in\Gamma$. Equip $V$ and $V^+$ with the following partial ordering:
$x\le y$ if $y-x\in V^{pos}$.
By Lemma~\ref{l:Zel1}, the retraction $\fL :V\to V^+$ from Section~\ref{ss:retraction_metric} has the
following property:

\begin{equation}    \label{e:Lxgex}
\fL (x)\ge x, \quad x\in V.
\end{equation}

\begin{thm}    \label{t:Langlands} 
Assume that 
\begin{equation}     \label{e:obtuse}
\langle \alpha_i\,, \alpha_j \rangle\le 0 \mbox{ for } i\ne j. 
\end{equation}
Then the retraction $\fL :V\to V^+$ is order-preserving.
\end{thm}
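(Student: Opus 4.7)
My plan is to exploit the piecewise-linear description of $\fL$ given by \propref{p:Zel2} and reduce the monotonicity assertion to a single linear-algebra statement, which will be where the hypothesis \eqref{e:obtuse} enters.

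\textbf{Reduction to a claim about $\on{pr}_J(\alpha_k)$.} Given $x\le x'$, consider the straight segment $\gamma(t):=x+t(x'-x)$ for $t\in[0,1]$. Since the fan $\{K_J\}_{J\subset\Gamma}$ is finite, this segment decomposes into finitely many sub-segments $[t_i,t_{i+1}]$, each contained in a single cone $K_{J_i}$. On such a sub-segment, $\fL\circ\gamma$ is affine with constant slope $\on{pr}_{J_i}(x'-x)$ by \propref{p:Zel2}(a), so telescoping gives
\[
\fL(x')-\fL(x)=\sum_i (t_{i+1}-t_i)\,\on{pr}_{J_i}(x'-x).
\]
Writing $x'-x=\sum_{k\in\Gamma}c_k\alpha_k$ with $c_k\ge 0$ and using linearity of each $\on{pr}_{J_i}$, the desired inclusion $\fL(x')-\fL(x)\in V^{pos}$ will follow from the uniform claim that
\[
\on{pr}_J(\alpha_k)\in V^{pos}\qquad\text{for every }J\subset\Gamma\text{ and every }k\in\Gamma.
\]

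\textbf{The linear-algebra core.} The case $k\in J$ is trivial since $\alpha_k\in V_J=\ker\on{pr}_J$. Assume $k\notin J$ and write $\alpha_k=u+v$ with $u=\sum_{j\in J}a_j\alpha_j\in V_J$ and $v:=\on{pr}_J(\alpha_k)\in V_J^\perp$. Pairing with $\alpha_l$ for each $l\in J$ and using $v\perp\alpha_l$ gives the linear system
\[
\sum_{j\in J}a_j\langle\alpha_j,\alpha_l\rangle=\langle\alpha_k,\alpha_l\rangle,\qquad l\in J,
\]
whose right-hand side is $\le 0$ by \eqref{e:obtuse} (since $k\ne l$ because $k\notin J\ni l$). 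The coefficient matrix $M_J:=(\langle\alpha_j,\alpha_l\rangle)_{j,l\in J}$ is symmetric positive definite (Gram matrix of a linearly independent family) and has non-positive off-diagonal entries by \eqref{e:obtuse}; this is precisely a positive definite $M$-matrix, i.e.\ a Stieltjes matrix. Its inverse is entrywise non-negative, hence all $a_j\le 0$, and finally
\[
v=\alpha_k-u=\alpha_k+\sum_{j\in J}(-a_j)\alpha_j\in V^{pos},
\]
as required.

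\textbf{Main obstacle.} The only non-formal input is the entrywise non-negativity of $M_J^{-1}$, which is also the precise point where \eqref{e:obtuse} is substantively used. It admits a short proof: writing $M_J=D-A$ with $D$ the (positive) diagonal part and $A\ge 0$ entrywise, positive-definiteness of $M_J$ forces the spectral radius of $D^{-1/2}AD^{-1/2}$ to be $<1$ (by Perron--Frobenius, it equals the largest eigenvalue, which is $<1$ by positive-definiteness), so the Neumann series $(I-D^{-1/2}AD^{-1/2})^{-1}=\sum_{n\ge 0}(D^{-1/2}AD^{-1/2})^n$ has non-negative entries, whence $M_J^{-1}=D^{-1/2}(I-D^{-1/2}AD^{-1/2})^{-1}D^{-1/2}\ge 0$. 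Every other step in the plan is purely formal, so this classical Stieltjes-matrix fact is the real content of the theorem.
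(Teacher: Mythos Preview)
Your argument is correct and follows essentially the same route as the paper: reduce, via the piecewise-linear description of $\fL$ from \propref{p:Zel2}, to the claim that $\on{pr}_J(\alpha_k)\in V^{pos}$ for every $J$ and $k$ (this is the paper's Proposition~\ref{l:order-preservation}), and then prove that claim by writing $\on{pr}_J(\alpha_k)=\alpha_k+x$ with $x\in V_J$ and showing $x\ge 0$ using \eqref{e:obtuse}. The only differences are in the level of detail: where the paper invokes the general principle that a piecewise linear map is order-preserving iff it is so on each linearity domain, you spell this out via a segment-and-telescoping argument; and where the paper appeals to the well-known inclusion $V^+\subset V^{pos}$ (Lemma~\ref{l:well_known}), you prove the equivalent statement that the Gram matrix $M_J$ has entrywise non-negative inverse via the Stieltjes-matrix/Perron--Frobenius/Neumann-series argument.
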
 

By \eqref{e:Lxgex}, Theorem~\ref{t:Langlands} implies the following statement, which
characterizes $\fL$ in terms of the order relation. 

\begin{cor}      \label{c:Langlands} 
If \eqref{e:obtuse} holds then $\fL (x)$ is the least element in $\{ y\in V^+\,|\, y\ge x\}$. \qed
\end{cor}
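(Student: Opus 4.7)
The plan is very short because Theorem~\ref{t:Langlands} and the inequality \eqref{e:Lxgex} already do essentially all the work. I would argue as follows.

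First I would observe that by \eqref{e:Lxgex}, $\fL(x)$ itself lies in the set $\{y\in V^+\mid y\ge x\}$, so membership is immediate and the only issue is minimality. To prove minimality, take an arbitrary $y\in V^+$ with $y\ge x$. By \thmref{t:Langlands} the map $\fL$ is order-preserving, hence $\fL(y)\ge \fL(x)$. But $y\in V^+$ means that $y$ is its own closest point in $V^+$, so $\fL(y)=y$; combining, $y\ge\fL(x)$. Therefore $\fL(x)$ is the least element of $\{y\in V^+\mid y\ge x\}$.

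There is no real obstacle: the content of the corollary is entirely concentrated in \thmref{t:Langlands}. The only small point worth stating explicitly is the trivial fact that $\fL$ restricted to $V^+$ is the identity, which follows from the defining property of $\fL$ as the closest-point projection onto $V^+$.
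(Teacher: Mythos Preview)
Your proof is correct and matches the paper's approach exactly: the paper simply notes that the corollary follows from \eqref{e:Lxgex} together with \thmref{t:Langlands} and marks it with a \qed, which is precisely the argument you spell out (membership via \eqref{e:Lxgex}, minimality via the order-preserving property applied to $y\in V^+$ with $\fL(y)=y$).
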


Let us prove Theorem~\ref{t:Langlands}. To show that a piecewise linear map is order-preserving
it suffices to check that this is true on each of its linearity domains. So Theorem~\ref{t:Langlands} follows from Proposition~\ref{p:Zel2}(a) and the next proposition, which I learned from S.~Schieder \cite[Prop.3.1.2(a)]{Sch}.

\begin{prop}     \label{l:order-preservation} 
Assume \eqref{e:obtuse}. Then for each subset $J\subset\Gamma$ the map $\on{pr}_J :V\to V$ defined in Section~\ref{ss:retraction_metric} is order-preserving.
\end{prop}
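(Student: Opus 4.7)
The plan is to reduce the order-preservation of $\on{pr}_J$ to showing that $\on{pr}_J$ maps each generator $\alpha_i$ of $V^{pos}$ back into $V^{pos}$. Indeed, order-preservation is equivalent to $\on{pr}_J(V^{pos})\subset V^{pos}$, and since $\on{pr}_J$ is linear and $V^{pos}$ is the convex cone generated by the $\alpha_i$'s, this amounts to showing $\on{pr}_J(\alpha_i)\in V^{pos}$ for every $i\in\Gamma$. For $i\in J$ there is nothing to check since $\on{pr}_J(\alpha_i)=0$, so the whole proposition boils down to the case $i\notin J$.

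For $i\notin J$, write $\on{pr}_J(\alpha_i)=\alpha_i-q_i$ with $q_i=\sum_{j\in J}b_j\alpha_j$ the orthogonal projection of $\alpha_i$ onto $V_J$. Expanding $\on{pr}_J(\alpha_i)$ in the basis $\{\alpha_k\}$, the coefficient of $\alpha_i$ is $1$, the coefficient of $\alpha_k$ for $k\notin J\cup\{i\}$ is $0$, and the coefficient of $\alpha_j$ for $j\in J$ is $-b_j$. Hence the claim $\on{pr}_J(\alpha_i)\in V^{pos}$ is equivalent to the sign statement
\begin{equation*}
b_j\le 0\quad\text{for all }j\in J.
\end{equation*}
The main obstacle is this sign statement; it is really an assertion about the inverse of the Gram matrix $(\langle\alpha_j,\alpha_k\rangle)_{j,k\in J}$ applied to a non-positive vector, and it is here that the obtuseness hypothesis \eqref{e:obtuse} must be used in an essential way.

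To prove the sign statement, I argue by contradiction: suppose $L:=\{j\in J\mid b_j>0\}$ is non-empty, and set $r:=\sum_{j\in L}b_j\alpha_j$ and $s:=q_i-r=\sum_{j\in J\setminus L}b_j\alpha_j$. Using the characterization $\langle q_i,\alpha_j\rangle=\langle\alpha_i,\alpha_j\rangle$ for $j\in J$, one computes
\begin{equation*}
\langle q_i,r\rangle=\sum_{j\in L}b_j\langle\alpha_i,\alpha_j\rangle\le 0,
\end{equation*}
since $b_j>0$ and $\langle\alpha_i,\alpha_j\rangle\le 0$ (here one uses $i\ne j$, which holds because $i\notin J\supset L$). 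On the other hand, expanding $\langle q_i,r\rangle=\langle r,r\rangle+\langle s,r\rangle$, each cross term $b_jb_k\langle\alpha_j,\alpha_k\rangle$ with $j\in L$ and $k\in J\setminus L$ has $b_j>0$, $b_k\le 0$, and $\langle\alpha_j,\alpha_k\rangle\le 0$, so $\langle s,r\rangle\ge 0$ and therefore $\langle q_i,r\rangle\ge|r|^2$. Combining the two bounds forces $r=0$, which by the linear independence of $\{\alpha_j\}_{j\in J}$ contradicts $L\ne\varnothing$. This contradiction completes the argument.
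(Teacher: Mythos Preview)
Your proof is correct and follows the same overall reduction as the paper: both argue that it suffices to show $\on{pr}_J(\alpha_i)\in V^{pos}$ for each $i$, dispose of $i\in J$ trivially, and for $i\notin J$ reduce to showing that the $V_J$-component of $\alpha_i$ has all coefficients $\le 0$ in the basis $\{\alpha_j\}_{j\in J}$.

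The one difference is in how this last sign statement is established. The paper packages it as a separate lemma (Lemma~\ref{l:well_known}): if $x\in V_J$ satisfies $\langle x,\alpha_j\rangle\ge 0$ for all $j\in J$ then $x\ge 0$, which is just the inclusion $V_J^+\subset V_J^{pos}$ and is quoted as a well-known consequence of \eqref{e:obtuse}. You instead prove the needed sign directly, by splitting $q_i$ into the part $r$ with positive coefficients and the remainder $s$, and playing the two inequalities $\langle q_i,r\rangle\le 0$ and $\langle q_i,r\rangle\ge |r|^2$ against each other. Your argument is self-contained and in fact amounts to an ad hoc proof of Lemma~\ref{l:well_known}; the paper's version has the advantage of isolating that lemma as a reusable statement, while yours avoids any appeal to background facts.
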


To prove the proposition, we need the following lemma.

\begin{lem}   \label{l:well_known}
Let $J\subset\Gamma$. Suppose that $x\in V_J$ and $\langle x\,,\alpha_j\rangle\ge 0$ for all $j\in J$. Then $x\ge 0$.
\end{lem}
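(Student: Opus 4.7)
The plan is to write $x=\sum_{j\in J}c_j\alpha_j$ (which makes sense because $x\in V_J$) and to split this decomposition according to the sign of the coefficients: set $J_+:=\{j\in J\,:\, c_j\ge 0\}$, $J_-:=\{j\in J\,:\, c_j<0\}$, and put
\begin{equation*}
x_+:=\sum_{j\in J_+}c_j\alpha_j,\qquad x_-:=\sum_{j\in J_-}(-c_j)\alpha_j,
\end{equation*}
so that both $x_+$ and $x_-$ lie in $V^{pos}$ and $x=x_+-x_-$. The goal is then to show that $x_-=0$, which will imply $x=x_+\ge 0$.

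The key move is to compute $\langle x_-,x_-\rangle$ using the identity $x_-=x_+-x$, giving
\begin{equation*}
\langle x_-,x_-\rangle=\langle x_-,x_+\rangle-\langle x_-,x\rangle.
\end{equation*}
Expanding $\langle x_-,x_+\rangle=\sum_{j\in J_-,\,k\in J_+}(-c_j)c_k\langle\alpha_j,\alpha_k\rangle$, the indices $j$ and $k$ are automatically distinct (since $J_-\cap J_+=\emptyset$), so the obtuse condition \eqref{e:obtuse} together with the nonnegativity of $(-c_j)$ and $c_k$ makes this term $\le 0$. On the other hand, $\langle x_-,x\rangle=\sum_{j\in J_-}(-c_j)\langle \alpha_j,x\rangle\ge 0$ by the hypothesis of the lemma (each $\langle\alpha_j,x\rangle\ge 0$ for $j\in J$, and $(-c_j)>0$ on $J_-$), so the second term contributes $\le 0$ as well.

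Combining, $\langle x_-,x_-\rangle\le 0$; since $\langle\,,\,\rangle$ is positive definite, this forces $x_-=0$. Hence $x=x_+$, which is a nonnegative combination of the $\alpha_j$'s, so $x\ge 0$. I do not expect a serious obstacle here; the only subtle point is making sure the disjointness of $J_+$ and $J_-$ is what lets us apply the obtuse condition in the estimate for $\langle x_-,x_+\rangle$.
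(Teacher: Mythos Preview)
Your argument is correct: the splitting $x=x_+-x_-$ and the computation $\langle x_-,x_-\rangle=\langle x_-,x_+\rangle-\langle x_-,x\rangle\le 0$ go through exactly as you describe, and positive definiteness finishes it.

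The paper's proof is different in style rather than substance. It first reduces to the case $J=\Gamma$ (by replacing $V$ with $V_J$), observes that the statement then reads $V^+\subset V^{pos}$, and declares this to be a ``well known consequence'' of the obtuse condition \eqref{e:obtuse} without further argument. What you have written is precisely the standard proof of that well-known inclusion, carried out directly inside $V_J$ without bothering to make the reduction explicit. So your proof is self-contained where the paper's is a citation; neither approach gains or loses anything mathematically, but yours would be the one to give if a reader asked how the ``well known'' step actually goes.
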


\begin{proof}[Proof of the lemma]
We can assume that $J=\Gamma$ (otherwise replace $V$ by $V_J$ and $\Gamma$ by 
$\Gamma_J$). Then the lemma just says that $V^+\subset V^{pos}$. This is a well known consequence of \eqref{e:obtuse}.
\end{proof}

\begin{proof}[Proof of Proposition~\ref{l:order-preservation}]
We have to show that 
$\on{pr}_J (\alpha_i )\ge 0$ for any $i\in\Gamma$. If $i\in J$ then $\on{pr}_J (\alpha_i )= 0$. Now suppose that $i\not\in J$. By the definition of $\on{pr}_J\,$, we have 
$\on{pr}_J (\alpha_i )=\alpha_i+x$, where $x$ is the element of $V_J$ such that 
$\langle x\,,\alpha_j\rangle=-\langle \alpha_i\,,\alpha_j\rangle$ for all 
$j\in J$. By \eqref{e:obtuse} and Lemma~\ref{l:well_known}, $x\ge 0$, so 
$\on{pr}_J (\alpha_i )=\alpha_i+x\ge 0$. 
\end{proof}

\ssec{Another approach to the Langlands retraction}   \label{ss:another}
Suppose that \eqref{e:obtuse} holds. Then one could take \corref{c:Langlands} as the \emph{definition} of the Langlands retraction $\fL :V\to V^+$, i.e., one could define $\fL (x)$ to be the least element of the set 
$\{ y\in V^+\,|\, y\ge x\}$. This set is closed and non-empty (because \eqref{e:obtuse} implies that 
$V^+\subset V^{pos}$), so the existence of the least element in it follows from the next proposition.

\begin{prop}    \label{p:infinum_belongs}
Suppose that $\langle \alpha_i\,, \alpha_j \rangle\le 0$  for $i\ne j$. Then the infinum of any non-empty subset of $V^+$ belongs to $V^+$.
\end{prop}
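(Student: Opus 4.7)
My plan is to construct the infimum explicitly in coordinates dual to the $\omega_i$'s and then verify the result lies in $V^+$ by a direct computation exploiting \eqref{e:obtuse}. Introduce the linear functionals $\psi_j(x):=\langle\omega_j,x\rangle$, $j\in\Gamma$. Since $\{\omega_j\}$ is dual to $\{\alpha_j\}$, every $v\in V$ decomposes in the $\alpha$-basis as $v=\sum_k\psi_k(v)\alpha_k$; in particular the partial order admits the clean coordinate description $x\le y$ iff $\psi_j(x)\le\psi_j(y)$ for every $j\in\Gamma$.

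Set $b_j:=\inf_{s\in S}\psi_j(s)$ and let $z^*\in V$ be the unique element with $\psi_j(z^*)=b_j$ for all $j$. For $s\in V^+$ one checks $\psi_j(s)\ge 0$: this uses that the Gram matrix of $\{\omega_i\}$ has non-negative entries, a standard consequence of \eqref{e:obtuse} (it is equivalent to $V^+\subset V^{pos}$). Hence every $b_j$ is finite, and from the coordinate description of $\le$ it is immediate that $z^*$ is the greatest lower bound of $S$.

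The decisive step is to show $z^*\in V^+$, i.e.\ $d_j:=\langle\alpha_j,z^*\rangle\ge 0$ for every $j$. For each $s\in S$ put $t_k^{(s)}:=\psi_k(s)-b_k\ge 0$; by construction $\inf_{s\in S}t_j^{(s)}=0$, and $s-z^*=\sum_k t_k^{(s)}\alpha_k$. Applying $\langle\alpha_j,\cdot\rangle$ yields
\[
\langle\alpha_j,s\rangle=d_j+\langle\alpha_j,\alpha_j\rangle\,t_j^{(s)}+\sum_{k\ne j}\langle\alpha_j,\alpha_k\rangle\,t_k^{(s)}.
\]
The left side is $\ge 0$ since $s\in V^+$, and by \eqref{e:obtuse} the last sum is $\le 0$; therefore $d_j\ge -\langle\alpha_j,\alpha_j\rangle\,t_j^{(s)}$ for every $s\in S$. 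Taking the supremum over $s$ and using $\langle\alpha_j,\alpha_j\rangle>0$ together with $\inf_s t_j^{(s)}=0$ forces $d_j\ge 0$.

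The main obstacle I anticipate is precisely this last step. The partial order is read off from the $\psi_j$'s, whereas membership in $V^+$ is tested by the complementary functionals $\langle\alpha_j,\cdot\rangle$, and a naive change of basis between the two produces an expression for $d_j$ with mixed signs. The sign condition \eqref{e:obtuse}, combined with the tightness $\inf_s t_j^{(s)}=0$ expressing the optimality of $z^*$, is precisely what is needed to resolve this sign indeterminacy.
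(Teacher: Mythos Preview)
Your argument is correct and is essentially the paper's own proof written in coordinates: both expand $s-z^*$ in the $\alpha$-basis, use $\langle\alpha_j,s\rangle\ge 0$ together with \eqref{e:obtuse} to discard the off-diagonal contributions, and then exploit $\inf_s t_j^{(s)}=0$ (equivalently, the paper's passage from $x'_t$ to $y'$) to conclude $\langle\alpha_j,z^*\rangle\ge 0$. The only cosmetic difference is that the paper bounds the cross terms by $-\langle y'',\alpha_i\rangle$ before taking the infimum, whereas you drop them outright and take a supremum afterwards; the underlying inequality is identical.
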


Here ``infinum" is understood in terms of the partial ordering defined by $V^{pos}$. In other words, given a family of vectors
\begin{equation}   \label{e:given_vectors}
x_t\in V, \quad\quad x_t=\sum_i x_{i,t}\cdot\alpha_i \, ,
\end{equation}
its infinum equals $\sum\limits_i y_i\cdot\alpha_i$, where $y_i:=\inf\limits_t  x_{i,t}\,$. Note that if $x_t\in V^+$ then $x_t\in V^{pos}$, so $x_{i,t}\ge 0$ and  $\inf\limits_t  x_{i,t}$ exists.

\begin{proof}[Proof of \propref{p:infinum_belongs}]
Suppose that we have a family of vectors $x_t\in V^+$ and $y=\inf\limits_t  x_t$. The assumption $x_t\in V^+$ means that $\langle x_t\,, \alpha_i \rangle\ge 0$ for all $i$. We have to show that 
$\langle y\,, \alpha_i \rangle\ge 0$ for all $i$.

Fix $i$. Write $x_t=x'_t+x''_t$, $y=y'+y''$, where $$x'_t,y'\in\BR\alpha_i, \quad
x''_t,y''\in\bigoplus\limits_{j\ne i}\BR\alpha_j\,.$$
Clearly $y'=\inf\limits_t x'_t$, $y''=\inf\limits_t x''_t$. Then for every $t$ one has
\[
\langle x'_t\,, \alpha_i \rangle=\langle x_t\,, \alpha_i \rangle - \langle x''_t\,, \alpha_i \rangle
\ge -\langle x''_t\,, \alpha_i \rangle\ge -\langle y''\,, \alpha_i \rangle
\]
(the second inequality holds because $-\langle\alpha_j\,, \alpha_i \rangle\ge 0$ for $j\ne i$). So
\[
 \langle y'\,, \alpha_i \rangle=\inf\limits_t \langle x'_t\,, \alpha_i \rangle\ge - \langle y''\,, \alpha_i \rangle,
 \]
i.e., $\langle y\,, \alpha_i \rangle\ge 0$.
\end{proof}

\begin{rem}    \label{r:concave}
In the situation of the following example \propref{p:infinum_belongs} just says that the infinum of any family of concave functions is concave. In fact, the above proof of \propref{p:infinum_belongs} is identical to the proof of this classical statement.
\end{rem}

\begin{example}   \label{ex:concave}
Consider the root system of $SL(n)$. In this case $V$ is the orthogonal complement of  the vector 
$\varepsilon_1+\ldots+\varepsilon_n$ in the Euclidean space with orthonormal basis 
$\varepsilon_1,\ldots\varepsilon_n$, and $\alpha_i=\varepsilon_i-\varepsilon_{i+1}$, $1\le i\le n-1$.
Let $\omega_i\in V$ be the basis dual to $\alpha_i$. For each $v\in V$ define $f_{v}:\{ 0,\ldots ,n\}\to\BR$ by
\[
f_{v}(0)= f_{v}(n)=0,\quad  f_{v}(i)=\langle v\, ,\omega_i \rangle
\quad\mbox{ for } 0<i<n.
\]
Then the map $v\mapsto f_{v}$ identifies $V$ with the space of functions $f:\{ 0,\ldots ,n\}\to\BR$ such that 
$f (0)= f(n)=0$. Moreover, 
$V^{pos}$ identifies with the subset of non-negative functions $f$ and $V^+$ with the subset of \emph{concave} functions $f$. Thus the Langlands retraction assigns to a function $f:\{ 0,\ldots ,n\}\to\BR$ 
the smallest concave function which is $\ge f$. 
\end{example}



\ssec{Reductive groups}  \label{ss:reductive groups}
\subsection{A remark on rationaility}
Suppose that in the situation of \secref{ss:retraction_metric} one has $\langle \alpha_i\,,\alpha_j\rangle\in\BQ$ for all $i,j\in\Gamma$.
Then the $\BQ$-linear span of the $\alpha_i's$ equals the $\BQ$-linear span of the $\omega_i's$.
Denote it by $V^\BQ$. Then $V=V^\BQ\otimes\BR$. The cones $K_J$, the subspaces $V_J$, and the operators $\on{pr}_J$ from Section~\ref{ss:retraction_metric} are clearly defined over $\BQ$. So by Proposition~\ref{p:Zel2}, one has
\begin{equation}    \label{e:rationality}
\fL (V^\BQ)\subset V^\BQ .
\end{equation}


\subsection{The Langalnds retraction for coweights}

Now let $G$ be a connected reductive group over an algebraically closed field. Let $\Lambda_G$ be its coweight lattice, i.e., 
$\Lambda_G=\Hom (\BG_m,T)$, where $T$ is \emph{the} maximal torus of $G$. 
Set $\Lambda^{\BQ}_G:=\Lambda_G\otimes\BQ$. We have the simple coroots 
$\check\alpha_i\in \Lambda_G$ and the simple roots $\alpha_i\in \Hom (\Lambda_G,\BZ )$.
Let $\Lambda^{+,\BQ}_G\subset\Lambda^{\BQ}_G$ denote the dominant cone. 
Equip $\Lambda^{+,\BQ}_G$ with the following partial ordering: $\lambda_1\leqG\lambda_2$ if
$\lambda_2-\lambda_1$ is a linear combination of the simple coroots  with non-negative coefficients.

Now define the \emph{Langlands retraction} $\fL_G:\Lambda^{\BQ}_G\to\Lambda^{+,\BQ}_G$  as follows:
$\fL_G (\lambda )$ is the least element of the set
\begin{equation}    \label{e:ge_lambda_in_the chamber}
\{\mu\in\Lambda^{+,\BQ}_G\;|\; \mu\underset{G}\ge\lambda \}
\end{equation}
with respect to the $\leqG$ ordering.

\begin{cor}      \label{c:reductive_groups}
(i) $\fL_G (\lambda )$ exists.

(ii) $\fL_G (\lambda )$ is the element of $\Lambda^{+,\BQ}_G$ closest to $\lambda$ with respect to any positive scalar product on $\Lambda^{+,\BQ}_G\otimes\BR$ which is  invariant with respect to the Weyl group.

(iii) $\fL_G (\lambda )$ is the unique element of the set \eqref{e:ge_lambda_in_the chamber} 
with the following property:
$\langle \fL_G (\lambda )\, ,\alpha_i\rangle=0$ 
for any simple root $\alpha_i$ such that
the coefficient of $\check\alpha_i$ in $\fL_G (\lambda )-\lambda$ is nonzero.
\end{cor}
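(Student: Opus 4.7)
\smallskip\noindent
\textbf{Proof plan.} The plan is to reduce Corollary~\ref{c:reductive_groups} to the vector-space results of Sections~\ref{ss:retraction_metric}--\ref{ss:another}. First I fix any Weyl-invariant positive definite form $(\,,\,)$ on $V:=\Lambda^{\BQ}_G\otimes\BR$, let $V_{ss}$ be the $\BR$-span of the simple coroots $\{\check\alpha_i\}$ and $V_{center}:=\{v\in V\mid\langle\alpha_i,v\rangle=0\text{ for all }i\}$, and let $\Lambda_G^{+,\BR}$ denote the closure of $\Lambda_G^{+,\BQ}$ in $V$. Using the identity $(\check\alpha_i,v)=\tfrac{2}{(\alpha_i,\alpha_i)}\langle\alpha_i,v\rangle$, the decomposition $V=V_{ss}\oplus V_{center}$ is orthogonal and $V_{center}$ coincides with the Weyl-fixed subspace. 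The dominant cone decomposes correspondingly as $\Lambda_G^{+,\BR}=V_{ss}^+\oplus V_{center}$, where $V_{ss}^+:=\{v\in V_{ss}:\langle\alpha_i,v\rangle\ge 0\text{ for all }i\}$, and the order $\leqG$ translates via this decomposition to: $\mu\underset{G}\ge\lambda$ iff $\mu_{center}=\lambda_{center}$ and $\mu_{ss}\ge\lambda_{ss}$ in the $V_{ss}^{pos}$-order of Section~\ref{ss:key_statements}.

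\smallskip\noindent
On $V_{ss}$ the basis $\{\check\alpha_i\}$ satisfies condition~\eqref{e:obtuse} (pairwise non-acute simple coroots for any Weyl-invariant form), so Corollary~\ref{c:Langlands} applies and yields a least element $\fL(\lambda_{ss})$ of $\{y\in V_{ss}^+:y\ge\lambda_{ss}\}$. Translating back, $\fL(\lambda_{ss})+\lambda_{center}$ is the least element of the set~\eqref{e:ge_lambda_in_the chamber}, proving~(i); rationality of this element comes from~\eqref{e:rationality}. For~(ii), I use that $\lambda_{center}\in V_{center}\subset\Lambda_G^{+,\BR}$, so orthogonality of the decomposition reduces the closest-point problem in $\Lambda_G^{+,\BR}$ to the closest-point problem for $\lambda_{ss}$ in $V_{ss}^+$; by the Carmona definition of Section~\ref{ss:retraction_metric} the answer is $\fL(\lambda_{ss})$, and adding back $\lambda_{center}$ gives $\fL_G(\lambda)$ as claimed.

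\smallskip\noindent
For~(iii), the existence of the stated vanishing property for $\fL_G(\lambda)$ is immediate from Lemma~\ref{l:Zel1} applied to $(\lambda_{ss},\fL(\lambda_{ss}))$ in $V_{ss}$. The main obstacle is \emph{uniqueness}: I have to show that any $\mu$ in~\eqref{e:ge_lambda_in_the chamber} satisfying the vanishing property equals $\fL_G(\lambda)$. Write $\mu-\lambda=\sum c_i\check\alpha_i$ and $\nu:=\mu-\fL_G(\lambda)=\sum d_i\check\alpha_i$ with $c_i,d_i\ge 0$ (the latter by minimality of $\fL_G(\lambda)$); then $d_i\le c_i$, so any $i$ with $d_i>0$ has $c_i>0$ and hence $\langle\alpha_i,\mu\rangle=0$ by hypothesis. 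Via $(\check\alpha_i,\cdot)=\tfrac{2}{(\alpha_i,\alpha_i)}\langle\alpha_i,\cdot\rangle$ this yields $(\nu,\mu)=0$, while $(\nu,\fL_G(\lambda))\ge 0$ because $\nu$ is a non-negative combination of the $\check\alpha_i$ and $\fL_G(\lambda)\in\Lambda_G^{+,\BR}$. Therefore $(\nu,\nu)=(\nu,\mu)-(\nu,\fL_G(\lambda))\le 0$, which by positive definiteness of $(\,,\,)$ forces $\nu=0$, i.e.\ $\mu=\fL_G(\lambda)$.
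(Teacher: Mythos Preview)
Your argument is correct and follows the same overall strategy as the paper---reduce to the Euclidean set-up of Sections~\ref{ss:retraction_metric}--\ref{ss:key_statements} via a Weyl-invariant inner product and invoke \lemref{l:Zel1}, \corref{c:Langlands}, and \eqref{e:rationality}---but you spell out two things the paper leaves implicit. First, you make explicit the orthogonal splitting $V=V_{ss}\oplus V_{center}$, which is needed because for a non-semisimple group the simple coroots are not a basis of $V$, so Sections~\ref{ss:retraction_metric}--\ref{ss:key_statements} do not apply to $V$ directly; the paper's one-line proof tacitly assumes this reduction. Second, for the uniqueness in~(iii) you give an independent positivity argument ($(\nu,\nu)\le 0\Rightarrow\nu=0$), whereas the paper simply uses the full equivalence in \lemref{l:Zel1}: the stated vanishing condition on $\mu$ is exactly condition~(b) there (with $J=\{j:\langle\alpha_j,\mu\rangle=0\}$), so \lemref{l:Zel1} already forces $\mu=\fL(\lambda_{ss})+\lambda_{center}$. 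Your direct argument is valid but longer than necessary; on the other hand, it has the virtue of not relying on the metric characterization of $\fL$ at all once~(i) is known.
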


\begin{proof}
Combine  \lemref{l:Zel1},  \corref{c:Langlands}, and the inclusion \eqref{e:rationality}.
\end{proof}

\subsection{Example: $G=GL(n)$}
In this case, just as in Example~\ref{ex:concave}, one identifies $\Lambda^{\BQ}_G$ with the space of functions $f:\{ 0,\ldots ,n\}\to\BQ$ such that $f(0)=0$ (while $f(n)$ is arbitrary). Then the subset $\Lambda^{\BQ}_G\subset\Lambda^{+,\BQ}_G$ identifies with the subset of \emph{concave} functions $f:\{ 0,\ldots ,n\}\to\BQ$ with $f(0)=0$. Just as in Example~\ref{ex:concave}, the Langlands retraction assigns to a function 
$f:\{ 0,\ldots ,n\}\to\BQ$ the smallest concave function which is $\ge f$. 

\ssec{Some historical remarks}    \label{ss:history}
In \cite{La} R.~Langlands defined the retraction $\fL$ and formulated his ``geometric lemmas" 
(see \cite[Lemmas 4.4-4.5 and Corollary 4.6]{La}) for the purpose of 
the classification of representations of real reductive groups in terms of tempered ones. However,
much earlier he had formulated a closely related (and more complicated) combinatorial lemma\footnote{An elementary introduction to this lemma can be found in \cite{Cas1,Cas2}.} in his theory of Eisenstein series, see \cite[Sect.~8]{La65}. In this work Langlands considers Eisenstein series on quotients of the form $G(\BR )/\Gamma$, where $G$ is a reductive group over $\BQ$ and $\Gamma$ is an arithmetic subgroup, but the same technique applies to quotients of the form $G(\BA)/G (\BQ )$. Note that the stack $\Bun_G$ considered in \cite{DrGa} is not far away from $G(\BA)/G (\BQ )$, so the fact that the Langlands retraction is used in 
\cite[Appendix B]{DrGa} is not surprising.


\end{document}